\newcommand{\dt}{{\Delta t}}
\newcommand{\Dt}{{\Delta t}}
\newcommand{\R}{\mathbb R}
\newcommand{\N}{\mathbb N}
\newcommand{\norm}[1]{\left\Vert#1\right\Vert}
\newcommand{\seq}[1]{\left\{#1\right\}}
\newcommand{\pl}{\partial^\ell_x}
\newcommand{\Curl}{\operatorname{curl}}
\newtheorem{theorem}{Theorem}[section]
\newtheorem{lemma}[theorem]{Lemma}
\numberwithin{equation}{section}   
\begin{document}
\title[Operator splitting for PDEs with Burgers nonlinearity]{Operator
  splitting for partial differential equations with Burgers
  nonlinearity}

\author[Holden]{Helge Holden} \address[Holden]{\newline Department of
  Mathematical Sciences, Norwegian University of Science and
  Technology, NO--7491 Trondheim, Norway,\newline {\rm and} \newline
  Centre of Mathematics for Applications,
  University of Oslo, P.O.\ Box 1053, Blindern, NO--0316 Oslo, Norway} 
\email[]{\href{holden@math.ntnu.no}{holden@math.ntnu.no}}
\urladdr{\href{http://www.math.ntnu.no/~holden}{www.math.ntnu.no/\~{}holden}}

\author[Lubich]{Christian Lubich} \address[Lubich]{\newline
  Mathematisches Institut, Universit\"at T\"ubingen, Auf der
  Morgenstelle 10, D--72076 T\"ubingen, Germany}
\email[]{\href{lubich@na.uni-tubingen.de}{lubich@na.uni-tubingen.de}}
\urladdr{\href{http://na.uni-tuebingen.de/~lubich/}{http://na.uni-tuebingen.de/\~{}lubich/}}

\author[Risebro]{Nils Henrik Risebro} \address[Risebro]{\newline
  Centre of Mathematics for Applications,
  University of Oslo, P.O.\ Box 1053, Blindern, NO--0316 Oslo, Norway}
\email[]{\href{nilshr@math.uio.no}{nilshr@math.uio.no}}
\urladdr{\href{folk.uio.no/nilshr}{folk.uio.no/nilshr}}

\date{\today}

\subjclass[2010]{Primary: 35Q53; Secondary: 65M12, 65M15}

\keywords{Operator splitting, Burgers equation, KdV equation,
  Benney--Lin equation, Kawahara equation}

\thanks{Supported in part by the Research Council of Norway.}

\begin{abstract}
  We provide a new analytical approach to operator splitting for
  equations of the type $u_t=Au+u u_x$ where $A$ is a linear
  differential operator such that the equation is well-posed.
  Particular examples include the viscous Burgers' equation, the
  Korteweg--de Vries (KdV) equation, the Benney--Lin equation, and the
  Kawahara equation. We show that the Strang splitting method
  converges with the expected rate if the initial data are
  sufficiently regular. In particular, for the KdV equation we obtain
  second-order convergence in $H^r$ for initial data in $H^{r+5}$ with
  arbitrary $r\ge 1$.
\end{abstract}

\maketitle
\section{Introduction} \label{sec:intro}

In this paper we study initial value problems of the type
\begin{equation}
  \label{pde}
  u_t=P(\partial_x)u+uu_x \quad (x\in\R, \  0\le t \le T), \quad u|_{t=t_0}=u_0,
\end{equation}
with a polynomial $P$ of degree $\ell\ge 2$ satisfying
\begin{equation}
  \label{P}
  \hbox{Re}\, P(i\xi) \le 0 \quad\hbox{ for all  $\xi\in\R$.}
\end{equation}
This class includes several important equations like
\begin{align*}
  u_t&=u_{xx}+ uu_x, && \text{the viscous Burgers equation},\\
  u_t&=u_{xxx}+ uu_x,&& \text{the Korteweg--de Vries equation,}\\
  u_t&=-u_{xxx}-\beta(u_{xx}+u_{xxxx})-u_{xxxxx}+uu_x, &&\text{the Benney--Lin equation \cite{Benney:1966,Lin:1974},}\\
  u_t&=u_{xxxxx}-u_{xxx}+ uu_x, && \text{the Kawahara
    equation \cite{Kawahara:1975}. }\\
\end{align*}
We employ operator splitting (see, e.g.,
\cite{HoldenKarlsenLieRisebro:2009}), i.e., construction of an
approximate solution by concatenating the solutions of the separate
problems
\begin{equation}
  \label{eq: burgers}
  v_t=P(\partial_x)v\quad\hbox{ and }\quad
  w_t=ww_x. 
\end{equation}
More precisely, if we let $u(t)=\Phi_C^t(u_0)$ denote the solution of
the initial value problem
\begin{equation}
  \label{eq:intro2}
  u_t=C(u), \quad u|_{t=0}=u_0,
\end{equation}
then (sequential) operator splitting claims that the exact solution
$u(t)=\Phi_C^t(u_0)$ is well approximated by $u_n$, at $t= n\Dt\le T$
and as $\Dt\to 0$, where
\begin{equation}
  \label{lie-split}
  u_{n+1}=\Phi_A^{\Dt}(\Phi_B^{\Dt}(u_n)), \quad n=0,1,2,\ldots, 
\end{equation}
and $C=A+B$. In our case we have the operator 
\begin{equation}
  \label{air}
Au=P(\partial_x)u
\end{equation}
 and
$B$ will be the Burgers operator 
\begin{equation}
  \label{burgers}
B(u)=u u_x
\end{equation}
acting on appropriate Sobolev spaces.  In the present paper a more
refined operator splitting, known as Strang splitting, will be
discussed. Strang splitting means that we consider the approximation
\begin{equation}
  \label{strang}
  u_{n+1}=\Psi^{\Dt}(u_n)=\Phi_A^{\Dt/2}\circ \Phi_B^{\Dt}\circ\Phi_A^{\Dt/2}(u_n),
  \quad n=0,1,2,\ldots\, .
\end{equation}
The technique we use requires a well-posedness theory for the full
equation in Sobolev spaces. This is currently available for the
equations listed above, see \cite{Lady:1968} (the viscous Burgers
equation), \cite{BonaSmith:1975,KenigPonceVega:1991,LinaresPonce:2009}
(the KdV equation), \cite{BiagioniLinares:1997} (the Benney--Lin
equation), and \cite{JiaHuo:2009} (the Kawahara equation). 
The challenge in
using operator splitting for the present class of equations is the
fact that the Burgers step, i.e., solving the second equation in
\eqref{eq: burgers}, generically introduces singularities in finite
time irrespective of the smoothness of the initial data, while the
solution of the full problem \eqref{eq:intro2} remains smooth. Thus
the determination of the time step used in the operator splitting and
the control of the smoothness of the approximate solution are rather
delicate. This implies that the standard estimates that are required  for operator splitting, see, e.g., \cite{HoldenKarlsenLieRisebro:2009}, do not apply. The idea of applying operator splitting to the KdV equation was introduced in a short note by Tappert  \cite{Tappert:1974}, with numerical tests.  A further study was done in \cite{HoldenKarlsenRisebro:1999} where a Lax--Wendroff theorem was proved, namely assuming convergence of operator splitting, one can prove that the limit is a weak solution of the KdV equation. Furthermore, extensive numerical computations were  presented, all indicating that operator splitting does converge. A completely different and novel approach was taken in the paper \cite{HoldenKarlsenRisebroTao:2009}. Previous results had as goal to use operator splitting to show existence of solutions of the KdV equation.   However, in \cite{HoldenKarlsenRisebroTao:2009} one took for granted the well-posedness of the Cauchy problem for the KdV equation in Sobolev spaces of high order.  By using a bootstrap principle (see \cite{Tao:2006}), one could show that the Burgers step in a Sobolev space of lower order would avoid blow-up, thereby securing that the full approximation remained smooth, and indeed converged to the solution of the KdV equation. A key part of this analysis was to secure a uniform time step in the approximation. The approach in \cite{HoldenKarlsenRisebroTao:2009}  also presented a new interpolation between the discrete time steps based on the introduction of a new auxiliary time variable, and hence a two-variable approximation was used.

In the present paper we take a different approach. We do take as a starting point the well-posedness of the Cauchy problem for the KdV equation, and we use the same analysis to estimate carefully the change in Sobolev norm during the Burgers step. However, instead of introducing a two-variable approximation and applying the bootstrap principle, we perform an analysis that identifies the principal error terms of the local error as quadrature errors, which are estimated via bounds of Lie commutators. Such a type of analysis was first done for operator splitting for linear evolution equations in \cite{JahnkeLubich:2000} and for non-linear Schr\"odinger equations in \cite{Lubich:2008}.  The error propagation is studied via ``Lady Windermere's fan'' \cite{HairerNorsettWanner:1993}, as is usual in the error analysis of time discretization methods, controlling here carefully the spatial regularity of the numerical approximations by first establishing lower-order convergence in a higher-order Sobolev norm, as in \cite{Lubich:2008}. In this way we improve and simplify the results in  
\cite{HoldenKarlsenRisebroTao:2009}.   The main result says that the Strang splitting converges to the full solution with the expected second-order convergence rate, in the $H^r$ Sobolev norm for $r\ge 1$ in the case of $H^{r+5}$ initial data for the KdV equation. Furthermore, as was clear already in  \cite{HoldenKarlsenRisebroTao:2009}, the analysis extends to a class of dispersive equations, and here we find it suitable to study the class \eqref{pde} subject to the condition \eqref{P}. This means that the equations all share a Burgers nonlinearity. The extension of the present approach to dispersive equations with other nonlinearities, not necessarily quadratic and also in higher spatial dimensions, appears feasible as long as an analogue of the regularity properties of the Burgers step can be established.  We also refer to \cite{HoldenKarlsenKarper:2011} where the approach of 
\cite{HoldenKarlsenRisebroTao:2009} is applied to the generalized active scalar equation 
\begin{equation*}
	\theta_t + \boldsymbol{u} \cdot \nabla \theta 
	+ \Lambda^\alpha \theta = 0, 
	\qquad \boldsymbol{u} = \Curl \Lambda^{-\beta}\theta,
	\end{equation*}
where  $\nabla$ is the spatial gradient operator and $\Lambda=(-\Delta)^{1/2}$. The unknown function is $\theta\colon [0,T]\times \R^2\to \R$ and we assume $\alpha \in (0,2]$.
  
\section{Error bounds for Strang splitting: statement of results}
Let $r$ be a positive integer and associate with it
$$
p=r+2\ell-1, \qquad q=r+\ell-1= p-\ell,
$$
where $\ell\ge 2$ is the degree of the polynomial $P$ in
\eqref{pde}. The integers $p$, $q$ and $r$ will be kept fixed throughout the
paper.

We require local \emph{well-posedness of \eqref{pde} in $H^{r}$ and
  $H^q$} in the following sense: For both $s=r$ and $s=q$ and for a
given time $T$, there exists $R>0$ such for all $u_0\in H^s$ with $\|
u_0\|_{H^s}\le R$, there is a unique strong solution $u\in C([0,T],
H^s)$ of \eqref{pde} with initial data $u_0$, and the dependence on
the initial data is locally Lipschitz-continuous: There is a constant 
$K=K(R,T)<\infty$ such that the solutions $\widetilde u$, $u$
corresponding to arbitrary initial data $\widetilde u_0$, $u_0$ in the
$H^s$-ball of radius $R$ satisfy
\begin{equation}\label{lip-flow}
  \| \widetilde u(t) - u(t) \|_{H^s} \le K \| \widetilde u_0 - u_0 \|_{H^s} 
  \quad\hbox{ for } \  0\le t \le T.
\end{equation}

For the KdV equation, it is known from \cite{KenigPonceVega:1991} that
this well-posedness assumption holds for every $s\in \N$, for
arbitrary final time $T$ and with arbitrary positive~$R$.

For the general class \eqref{pde},\eqref{P}, it can be shown with
arguments as in Lemma~\ref{lem:burgers-2} below, that the above
well-posedness result holds, at least, with sufficiently small
$R=R(T)$ for any given $T$, or for sufficiently small $T=T(R)$ for any
given $R$, the same as for the inviscid Burgers equation. For the
equations listed in the introduction, however, the well-posedness
situation is more favorable than for the inviscid Burgers equation;
see \cite{Lady:1968} (the viscous Burgers equation),
\cite{BonaSmith:1975,KenigPonceVega:1991,LinaresPonce:2009} (the KdV
equation), \cite{BiagioniLinares:1997} (the Benney--Lin equation), and
\cite{JiaHuo:2009} (the Kawahara equation).

We consider solutions bounded by 
\begin{equation}\label{HYP}
\| u(t) \|_{H^p} \le \rho <R \quad\hbox{ for } \ 0\le t \le T.
\end{equation}
In particular we assume that the initial data are in $H^p$:
$$
u_0\in H^p.
$$
Under these assumptions we will in this paper show the following results for the
Strang splitting \eqref{strang}.

\begin{theorem}
  \label{thm:one} {\bf (First-order convergence in $H^q$ and bound in
    $H^p$)}
  There is $\overline{\dt}>0$ such that for $\dt\le \overline{\dt}$
  and $t_n=n\dt\le T$,
$$
\| u_n - u(t_n) \|_{H^q} \le C_1 \dt \qquad\hbox{and}\qquad \| u_n
\|_{H^p} \le C_0.
$$
Here, $\overline{\dt}$, $C_0$ and $C_1$ only depend on
$\|u_0\|_{H^p}$, $\rho$, and $T$.
\end{theorem}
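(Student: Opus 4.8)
The plan is to combine a local error estimate with a stability argument via Lady Windermere's fan, closing an induction that couples $H^q$-convergence with the $H^p$-bound. First I would record the two one-step ingredients. The linear flow $\Phi_A^t=e^{tP(\partial_x)}$ acts by the Fourier multiplier $e^{tP(i\xi)}$, so \eqref{P} makes it non-expansive on every Sobolev space, $\norm{\Phi_A^t v}_{H^s}\le\norm{v}_{H^s}$. For the Burgers flow I would use the tame energy estimate underlying Lemma~\ref{lem:burgers-2}: differentiating $\norm{w}_{H^s}^2$ along $w_t=ww_x$ and integrating by parts gives $\tfrac{d}{dt}\norm{w}_{H^s}^2\le C\norm{w_x}_{L^\infty}\norm{w}_{H^s}^2$, and $\norm{w_x}_{L^\infty}\lesssim\norm{w}_{H^q}$ since $q\ge2>\tfrac32$. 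Thus along the Burgers step the top-order norm $H^p$ grows at a rate governed only by the lower-order norm $H^q$; this is the point that will break the apparent circularity below. Over one step this yields $\norm{\Phi_B^{\dt}v}_{H^p}\le(1+C\norm{v}_{H^q}\dt)\norm{v}_{H^p}$, together with a short-time non-blow-up bound at the level $H^q$, valid once $\dt$ is small relative to $\norm{v}_{H^q}$.

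Next I would establish the local error bound $\norm{\Psi^{\dt}(v)-\Phi_C^{\dt}(v)}_{H^q}\le C\dt^2$ for $v$ bounded in $H^p$, with $C$ depending on that bound. Writing both flows through the variation-of-constants formula $\Phi_C^{\dt}(v)=\Phi_A^{\dt}v+\int_0^{\dt}\Phi_A^{\dt-s}B(\Phi_C^s v)\,ds$ (and the analogous identity for the split flow), the leading discrepancy is the quadrature error of this Duhamel integral and is governed by the Lie commutator $[A,B](v)=A B(v)-B'(v)[Av]$. The decisive structural fact is that in this commutator the top-order contributions $v\,\partial_x^{\ell+1}v$ cancel, so that $[A,B]$ costs only $\ell$ derivatives rather than $\ell+1$; hence $[A,B](v)$ is bounded in $H^q$ by $\norm{v}_{H^{q+\ell}}=\norm{v}_{H^p}$. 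This is exactly why the choice $p=q+\ell$ is made. The remaining error terms and the $H^p$-regularity of the intermediate flows $\Phi_C^s v$ over $[0,\dt]$ are controlled by the same short-time energy estimates, using that $\Phi_A$ does not increase any Sobolev norm. Since only first order is claimed, the crude $\Oh(\dt^2)$ bound suffices and I need not exploit the Strang symmetry that would lower the local error to $\Oh(\dt^3)$.

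For the propagation I would use Lady Windermere's fan: with $\phi_j=\Phi_C^{t_n-t_j}(u_j)$ one has $\phi_0=u(t_n)$ and $\phi_n=u_n$, whence $u_n-u(t_n)=\sum_{j=0}^{n-1}\bigl(\Phi_C^{t_n-t_{j+1}}(u_{j+1})-\Phi_C^{t_n-t_{j+1}}(\Phi_C^{\dt}(u_j))\bigr)$, and each summand is the local error at step $j$ transported to time $t_n$ by the exact flow. The Lipschitz bound \eqref{lip-flow} in $H^q$, with constant $K$, turns each summand into $K$ times the local error at step $j$, provided the relevant iterates lie in the $H^q$-ball of radius $R$. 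Summing the $n\le T/\dt$ contributions of size $C\dt^2$ gives $\norm{u_n-u(t_n)}_{H^q}\le KCT\dt=:C_1\dt$.

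Finally I would close the argument by induction on $n$, carrying both $\norm{u_j}_{H^p}\le C_0$ and $\norm{u_j-u(t_j)}_{H^q}\le C_1\dt$ for all $j\le n$. The $H^p$-bound at step $n$ feeds the local error estimate, which via the fan gives $H^q$-convergence at step $n+1$; the latter yields $\norm{u_{n+1}}_{H^q}\le\rho+C_1\dt<R$, which bounds the Burgers growth rate and, with the non-expansiveness of $\Phi_A$, gives $\norm{u_{n+1}}_{H^p}\le(1+C\rho\,\dt)\norm{u_n}_{H^p}$ and hence $\norm{u_{n+1}}_{H^p}\le e^{C\rho T}\norm{u_0}_{H^p}=:C_0$. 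Choosing $\overline{\dt}$ so small that the Burgers step never blows up, all iterates remain in the radius-$R$ ball, and the growth factors stay near $1$ then closes the induction, with $\overline{\dt}$, $C_0$, $C_1$ depending only on $\norm{u_0}_{H^p}$, $\rho$, and $T$. I expect the main obstacle to be exactly this coupling: the top-order $H^p$-bound cannot be extracted from the $H^q$-convergence, which loses $\ell$ derivatives, so it must come from a direct growth estimate whose rate is controlled by the lower norm $H^q$; it is the tame structure of the Burgers nonlinearity, together with the commutator cancellation matching $p=q+\ell$, that lets the induction close without the constants exploding.
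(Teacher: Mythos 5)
Your proposal is correct and follows essentially the same route as the paper: the tame Burgers energy estimate in which the $H^p$ growth rate is controlled by the $H^q$ norm (the paper's Lemma~\ref{lem:burgers-1}, which you derive via the standard $\norm{w_x}_{L^\infty}$ estimate), the $\Oh(\dt^2)$ local error in $H^q$ from the midpoint quadrature error of the Duhamel integral and the commutator cancellation matching $p=q+\ell$ (Lemma~\ref{lem:local-1}), and Lady Windermere's fan with exact-flow Lipschitz stability closed by the same coupled induction. The only slip is cosmetic: the $H^p$ growth in the step $u_n\mapsto u_{n+1}$ is governed by the $H^q$ size of the Burgers trajectory launched from $\Phi_A^{\dt/2}u_n$ (hence by $\norm{u_n}_{H^q}\le R$, via Lemma~\ref{lem:burgers-2}, giving a rate like $e^{2cR\dt}$), not by $\norm{u_{n+1}}_{H^q}$ or by $\rho$ itself, but this does not affect how the induction closes.
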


This result will be used to prove our main result:
\begin{theorem}
  \label{thm:two} {\bf (Second-order convergence in $H^{r}$)}  Assume
  that we have a solution $u$ of \eqref{pde} that satisfies
  \eqref{HYP}.  Define the Strang approximation $u_n$ by
  \eqref{strang}.  Then the following statement holds: 
  There is $\overline{\dt}>0$ such that for $\dt\le \overline{\dt}$
  and $t_n=n\dt\le T$,
$$
\| u_n - u(t_n) \|_{H^{r}} \le C_2 \dt^2.
$$
Here, $\overline{\dt}$ and $C_2$ only depend on $\|u_0\|_{H^p}$,
$\rho$, and $T$.
\end{theorem}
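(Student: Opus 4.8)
The plan is to reduce the global second-order bound to a third-order bound for the local error of a single Strang step, which is then propagated by the Lipschitz-stable exact flow, with Theorem~\ref{thm:one} serving to confine all numerical iterates to the region where the flow is well-behaved. The central estimate I would prove is a \emph{local error bound}: for every $v$ in a fixed ball of $H^p$ there is a constant $c$, depending only on that ball, $\rho$, and $T$, such that for all sufficiently small $\Dt$
\[
\norm{\Psi^{\Dt}(v) - \Phi_C^{\Dt}(v)}_{H^r} \le c\,\Dt^3 .
\]
To obtain this I would represent both the exact flow $\Phi_C^{\Dt}$ and the split flow $\Psi^{\Dt}$ through the Duhamel formula, in which the semigroup $e^{tA}=\Phi_A^{t}$ is applied to the Burgers nonlinearity $B$ along the respective trajectories. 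Their difference then takes the form of a \emph{quadrature error}: the Strang composition corresponds to a symmetric, midpoint-type rule for the time integral of $s\mapsto e^{(\Dt-s)A}B(\dott)$, and the symmetry of Strang splitting makes this rule exact to one order higher than in the Lie case, which is exactly what upgrades the local error from $\Dt^2$ to $\Dt^3$. Expanding the integrand in $s$ and collecting terms, the leading error coefficients are the iterated Lie commutators of the vector fields $A$ and $B$, principally $[A,[A,B]]$ and $[B,[B,A]]$.

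The decisive point is that these commutators can be bounded as maps into $H^r$ at the cost of precisely the regularity supplied by $H^p$. Since $A=P(\partial_x)$ has order $\ell$ and $B(u)=uu_x$ is first order, a naive count would lose $\ell+1$ derivatives per bracket; but the top-order parts of $B'(u)(Au)$ and $A(B(u))$ coincide and cancel, so $[A,B]$ loses only $\ell$ derivatives. Iterating the same cancellation, the worst term $[A,[A,B]]$ loses exactly $2\ell-1$ derivatives, which is the gap $p-r=2\ell-1$ built into the definition of $p$, while $[B,[B,A]]$, of order $\ell$, is less demanding. I would therefore verify these commutator cancellations explicitly and, using the algebra and Moser-type calculus estimates in $H^s$, bound each commutator from $H^p$ into $H^r$ uniformly on the chosen ball. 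Because the nonlinear sub-flow $\Phi_B^{\Dt}$ can degrade regularity and exists only up to a blow-up time, I would invoke the short-time regularity of the Burgers step (as in Lemma~\ref{lem:burgers-2}) to guarantee that, for $\Dt$ small relative to $\norm{v}_{H^p}$, every sub-flow occurring in the expansion stays in a fixed $H^p$-ball and the remainder terms are controlled.

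With the local bound in hand, I would pass to the global error by the Lady Windermere's fan telescoping
\[
u_n - u(t_n) = \sum_{k=0}^{n-1} \Bigl( \Phi_C^{t_n-t_{k+1}}\bigl(\Psi^{\Dt}(u_k)\bigr) - \Phi_C^{t_n-t_{k+1}}\bigl(\Phi_C^{\Dt}(u_k)\bigr) \Bigr),
\]
estimating each summand with the Lipschitz stability \eqref{lip-flow} of the exact flow in $H^r$ (the case $s=r$ of the well-posedness hypothesis). This yields
\[
\norm{u_n-u(t_n)}_{H^r} \le K\sum_{k=0}^{n-1}\norm{\Psi^{\Dt}(u_k)-\Phi_C^{\Dt}(u_k)}_{H^r} \le K\,n\,c\,\Dt^3 \le K\,c\,T\,\Dt^2 .
\]
For this to be legitimate, every argument of $\Phi_C$ above must lie in the $H^r$-ball of radius $R$ on which \eqref{lip-flow} holds, and the local-error constant $c$ must be uniform along the iteration. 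Both are furnished by Theorem~\ref{thm:one}: the uniform bound $\norm{u_n}_{H^p}\le C_0$ keeps $c$ under control, while the first-order convergence $\norm{u_n-u(t_n)}_{H^q}\le C_1\Dt$, together with $r\le q$ and \eqref{HYP}, ensures that for $\Dt$ below a threshold the iterates $u_k$, and hence $\Psi^{\Dt}(u_k)$ and $\Phi_C^{\Dt}(u_k)$, stay inside the well-posedness ball in $H^r$.

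The main obstacle is the local error estimate, and within it the regularity bookkeeping through the nonlinear Burgers step. Unlike the linear or Schr\"odinger settings treated in \cite{JahnkeLubich:2000,Lubich:2008}, here $\Phi_B^{\Dt}$ genuinely loses Sobolev regularity and is only defined up to a blow-up time, so the Taylor and quadrature expansion of the split flow must be justified while simultaneously tracking that no intermediate object leaves $H^p$. The exact cancellation of top-order terms in the commutators, which makes $[A,[A,B]]$ lose only $2\ell-1$ rather than $2\ell+1$ derivatives, is what makes the choice $p=r+2\ell-1$ sharp and is the delicate computation to get right. Once the local bound and the confinement of the iterates to the well-posedness ball are in place, the propagation step is routine.
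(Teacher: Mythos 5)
Your proposal is correct and follows essentially the same route as the paper: a third-order local error bound in $H^r$ (the paper's Lemma~\ref{lem:local-2}), obtained from Duhamel's formula, the midpoint-rule quadrature error, and the cancellation of top-order derivatives in the commutators $[A,B]$ and $[A,[A,B]]$ (losing exactly $\ell$ and $2\ell-1$ derivatives, matching $p-r=2\ell-1$), combined with Lady Windermere's fan, the Lipschitz stability \eqref{lip-flow} in $H^r$, and Theorem~\ref{thm:one} to keep the iterates in the $H^p$-ball and the well-posedness ball. The paper organizes the remainder analysis slightly differently (treating $e_2-e_1$ as a two-dimensional quadrature error rather than invoking $[B,[B,A]]$ by name), but this is the same analysis in substance.
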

For the KdV equation, this result yields second-order convergence in
$H^r$ for initial data in $H^{r+5}$ for arbitrary $r\in \N$.  This
improves significantly on the result of
\cite{HoldenKarlsenRisebroTao:2009}, where second-order convergence is
shown in the $H^r$-norm for $r\ge 8$ for initial data in
$H^{r+9}$. The proof uses a regularity result for the inviscid Burgers
equation as in \cite{HoldenKarlsenRisebroTao:2009}, but is otherwise
quite different from the proof given there. It is conceptually closely
related to the error analysis of Strang splitting given in
\cite{Lubich:2008} for the nonlinear Schr\"odinger equation. The proofs
of Theorems~\ref{thm:one} and~\ref{thm:two} will cover the remainder
of the paper. We remark that throughout our computations, $C$ denotes
a generic constant whose value may change at each occurrence.

\section{Regularity results for the inviscid Burgers equation}
The following variant of a result in
\cite{HoldenKarlsenRisebroTao:2009} will play a key role in the proof
of Theorem~\ref{thm:one}.
\begin{lemma}
  \label{lem:burgers-1}
  If $\|\Phi_B^t(u_0) \|_{H^q} \le \alpha$ for $0\le t\le\dt$, then
  $\|\Phi_B^t(u_0) \|_{H^p} \le e^{c\alpha t}\| u_0 \|_{H^p}$ for
  $0\le t\le\dt$, where $c$ is independent of $u_0$ and $\dt$.
\end{lemma}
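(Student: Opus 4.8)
The plan is to estimate the growth of the $H^p$ norm of $w(t) = \Phi_B^t(u_0)$ along the Burgers flow by differentiating $\|w\|_{H^p}^2$ in time and controlling the resulting expression using the assumed bound on the lower-order $H^q$ norm. Since $\|w\|_{H^p}^2 = \sum_{j=0}^p \|\partial_x^j w\|_{L^2}^2$, I would compute
\begin{equation*}
\frac{1}{2}\frac{d}{dt}\|\partial_x^j w\|_{L^2}^2 = \int_\R \partial_x^j w \, \partial_x^j(w w_x)\,dx,
\end{equation*}
using that $w_t = w w_x$. Applying the Leibniz rule to expand $\partial_x^j(w w_x)$ yields a sum of terms of the form $\binom{j}{k}\,\partial_x^k w\,\partial_x^{j-k+1}w$, and the task is to bound each of these paired against $\partial_x^j w$ in $L^2$.

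The decisive structural point is that the single top-order term, where all $j$ derivatives fall on $w_x$ (i.e.\ the $k=0$ term $w\,\partial_x^{j+1}w$), must be treated by integration by parts rather than by a naive product estimate, since $\partial_x^{j+1}w$ is of order one higher than anything controlled in $H^p$. For this term I would write $\int \partial_x^j w \, w\, \partial_x^{j+1}w\,dx = -\tfrac12\int w_x\,(\partial_x^j w)^2\,dx$, so that the apparently dangerous term collapses into something bounded by $\|w_x\|_{L^\infty}\|\partial_x^j w\|_{L^2}^2$. All remaining terms ($1\le k\le j$) have both factors of order at most $p$, and can be bounded by splitting off the lower-order factor in $L^\infty$ and keeping the higher-order factor in $L^2$, i.e.\ by products of the shape $\|\partial_x^k w\|_{L^\infty}\|\partial_x^{j-k+1}w\|_{L^2}\|\partial_x^j w\|_{L^2}$.

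The key observation making everything work is that the $L^\infty$ factors that appear in all of these bounds involve only derivatives of order at most $q+1 = r+\ell$, hence are controlled by $\|w\|_{H^q}$ via the Sobolev embedding $H^1(\R)\hookrightarrow L^\infty(\R)$ (more precisely, $\|\partial_x^m w\|_{L^\infty}\lesssim \|w\|_{H^{m+1}}$ for $m+1\le q$). Because $p = q+\ell$ and $\ell\ge 2$, in every Leibniz term the factor carrying fewer derivatives lands safely below the $H^q$ threshold, so each such $L^\infty$ factor is $\le C\alpha$ using the hypothesis $\|w\|_{H^q}\le\alpha$. I would collect all contributions into a differential inequality of the form
\begin{equation*}
\frac{d}{dt}\|w\|_{H^p}^2 \le c\,\alpha\,\|w\|_{H^p}^2,
\end{equation*}
with $c$ depending only on $p$ (hence on $r$ and $\ell$) through the number and combinatorics of the Leibniz terms, and in particular independent of $u_0$ and $\dt$.

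The main obstacle is the bookkeeping that guarantees the claimed uniformity of $c$ and the clean separation of scales: one must verify that for every index $j\le p$ and every split $0\le k\le j$, after handling the single top-order term by integration by parts, the $L^\infty$-controlled factor never requires more than $q$ derivatives, so that no genuinely $H^p$-level quantity ever appears as an $L^\infty$ norm. Once the differential inequality is established, Gr\"onwall's lemma gives $\|w(t)\|_{H^p}^2\le e^{c\alpha t}\|u_0\|_{H^p}^2$, and taking square roots (absorbing the factor $2$ into $c$) yields the stated bound $\|\Phi_B^t(u_0)\|_{H^p}\le e^{c\alpha t}\|u_0\|_{H^p}$ for $0\le t\le\dt$.
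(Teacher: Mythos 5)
Your overall skeleton coincides with the paper's proof: differentiate $\|w(t)\|_{H^p}^2$ along the flow $w(t)=\Phi_B^t(u_0)$, expand $\partial_x^j(ww_x)$ by the Leibniz rule, remove the single genuinely top-order term $\int \partial_x^p w\, w\, \partial_x^{p+1}w\,dx$ by integration by parts, and conclude by Gr\"onwall. However, the step you yourself flag as the ``key observation'' contains a genuine gap: it is \emph{not} true that in every Leibniz term the factor carrying fewer derivatives can be placed in $L^\infty$ and controlled by $\|w\|_{H^q}$. That placement requires $\min\{k,\,j-k+1\}+1\le q$, and the balanced terms violate this whenever $r\le 2$. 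Concretely, for the KdV case $\ell=3$, $r=1$ (the paper's headline case), one has $p=6$, $q=3$, and the term $\binom{6}{3}\int \partial_x^6 w\,\partial_x^3 w\,\partial_x^4 w\,dx$ has lower-order factor $\partial_x^3 w$, whose $L^\infty$ norm is controlled only by $\|w\|_{H^4}$ --- not by $\|w\|_{H^3}=\|w\|_{H^q}$, which is all the hypothesis gives. Your write-up is also internally inconsistent on this point: you say the $L^\infty$ factors may have order up to $q+1$, but by the embedding you quote, an $L^\infty$ bound on a derivative of order $q+1$ costs $\|w\|_{H^{q+2}}$, not $\|w\|_{H^q}$.

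What \emph{is} true --- and is what the paper exploits --- is that the lower-order factor always has order at most $q$, since $\min\{k+1,\,j-k\}\le\tfrac12(j+1)\le\tfrac12 p\le p-\ell=q$ (using $p\ge 2\ell$). So the factor of $\alpha$ must be harvested from that lower-order factor measured in $L^2$, not in $L^\infty$, while the $L^\infty$ norm is placed on a factor of order at most $p-1$, where it is paid for by $\|w\|_{H^p}$. For $j<p$ one can simply put $\partial_x^j w$ itself in $L^\infty$; for $j=p$ a short case analysis is needed (this is exactly the list of cases in the paper's proof): in the example above one puts $\partial_x^4 w$ in $L^\infty$ (bounded by $C\|w\|_{H^5}\le C\|w\|_{H^p}$) and $\partial_x^3 w$ in $L^2$ (bounded by $\alpha$), obtaining the required bound $C\alpha\|w\|_{H^p}^2$. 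Alternatively, your estimate could be repaired by interpolation, $\|w\|_{H^s}\le\|w\|_{H^q}^{\theta}\|w\|_{H^p}^{1-\theta}$ for $q\le s\le p$, but as written your proof does not cover $r\in\{1,2\}$, and these are precisely the cases that make the paper's theorem an improvement over earlier work.
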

\begin{proof}
  We find that $w(t)=\Phi_{B}^t(u_0)$ satisfies
  (cf.~\cite{HoldenKarlsenRisebroTao:2009})
  \begin{equation}
    \label{eq:kdv4}
    \begin{aligned}
      &\| w\|_{H^p}\,\frac{d}{dt}\| w\|_{H^p}=
      \frac12\frac{d}{dt}\norm{\Phi_{B}^t(u_0)}_{H^p}^2=(w,w_t)_{H^p}
      \\
      &= \sum_{j=0}^p \int_{\R} \partial^j_x w\, \partial^j_x (w
      w_x)\, dx = \sum_{j=0}^p \sum_{k=0}^j \binom{j}{k} \int
      \partial^j_x w \, \partial^{k+1}_xw\, \partial^{j-k}_xw \,dx.
    \end{aligned}
  \end{equation}
  For $j<p$, any of the above terms can be estimated by
  \begin{align*}
    \Bigl| \int \partial^j_x w\,\partial^{k+1}_x w\, \partial^{j-k}_x
    w\,dx\Bigr|&\le \norm{\partial^{j}_x w}_{L^\infty}
    \norm{\partial^{\max\seq{k+1,j-k}}_x w}_{L^2}
    \norm{\partial^{\min\seq{k+1,j-k}}_x w}_{L^2}
    \\
    &\le C \norm{w}_{H^p}^2 \norm{w}_{H^{q}} \\
    &\le C \alpha \norm{w}_{H^p}^2,
  \end{align*}
  by using the Sobolev inequality $\norm{v}_{L^\infty}\le \frac1{\sqrt{2}}
  \norm{v}_{H^1}$ (so that $C=1/\sqrt{2}$) and the fact that
  $\min\seq{k+1,j-k}\le \tfrac12(j+1) \le \tfrac12 p \le p-\ell=q$ since
  $p\ge 2\ell$.  For
  $j=p$ we estimate for $k\le q$ and $k\ne q-1$ by
  \begin{align*}
    I:= \Bigl| \int \partial^p_x w\,\partial^{k+1}_x
    w\, \partial^{p-k}_x w\,dx\Bigr| 
    &\le \norm{\partial^{p}_x
      w}_{L^2}\norm{\partial^{k+1}_x w}_{L^\infty}
    \norm{\partial^{p-k}_x w}_{L^2}
    \\
    &\le C \norm{w}_{H^p}\norm{w}_{H^{k+2}} \norm{ w}_{H^{p-k}}.
  \end{align*}
  To estimate this further we have to distinguish the cases  $k+2\le q$  and $k=q$. In the former case  we find
$$
I \le C \norm{w}_{H^p} \norm{w}_{H^q}\norm{w}_{H^{p}} \le C \alpha
\norm{w}_{H^p}^2,
$$
while in the latter case we obtain
$$
I\le C \norm{w}_{H^p}\norm{w}_{H^{p}}\norm{w}_{H^{q}} \le C \alpha
\norm{w}_{H^p}^2,
$$
since $q+2\le q+\ell=p$ and $p-q=\ell\le \ell +r-1 =q$.

In the two cases where either $k+1=q$ or  $q+2\le k+1 \le p$ we estimate 
\begin{align*}
  I &\le \norm{\partial^{p}_x w}_{L^2}\norm{\partial^{k+1}_x w}_{L^2}
  \norm{\partial^{p-k}_x w}_{L^\infty}
  \\
  &\le C\norm{w}_{H^p}\norm{w}_{H^{k+1}} \norm{ w}_{H^{p-k+1}}.
\end{align*}
In both cases this is bounded by $C \alpha \norm{w}_{H^p}^2$, since in
the case $k+1=q$ we have $p-k+1=\ell+2\le 2\ell \le p$, and for $k+1\ge
q+2$ we have $p-k+1\le p-q = \ell \le q$.

We are left with the term where $k=p=j$, viz.
\begin{align*}
  \Bigl| \int \partial^p_x w\, \partial^{p+1}_x w \, w \,dx \Bigr| &=
  \frac{1}{2}\Bigl | \int \left(\partial^p_x w\right)^2 \, \partial_x
  w\,dx\Bigr| \\
  &\le \norm{\partial_x w}_{L^\infty} \norm{\partial^p_x w}_{L^2}^2\\
  &\le C \norm{w}_{H^q} \norm{w}_{H^p}^2,
\end{align*}
since $q\ge \ell\ge 2$. Thus, 
 $$
 \frac{d}{dt} \norm{w(t)}_{H^p} \le c \alpha
 \norm{w(t)}_{H^p},
  $$
  which concludes the proof.
\end{proof}

We also need the following lemma:
\begin{lemma}
  \label{lem:burgers-2}
  If $\| u_0 \|_{H^m} \le M$ for some $m\ge 1$, then there exists
  $\overline t(M)>0$ such that $\| \Phi^t_B(u_0) \|_{H^m}  \le 2M$ for
  $0\le t \le \overline t(M)$.
\end{lemma}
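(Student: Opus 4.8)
The plan is to run the same energy estimate as in the proof of Lemma~\ref{lem:burgers-1}, but now keeping the full $H^m$-norm on both sides, so that the resulting differential inequality is quadratic in $\|w\|_{H^m}$ and can be compared with an explicitly solvable Riccati ODE. I would set $w(t)=\Phi_B^t(u_0)$ and compute, exactly as in \eqref{eq:kdv4} with $p$ replaced by $m$,
$$
\|w\|_{H^m}\,\frac{d}{dt}\|w\|_{H^m} = \sum_{j=0}^m\sum_{k=0}^j \binom{j}{k}\int \partial_x^j w\,\partial_x^{k+1}w\,\partial_x^{j-k}w\,dx,
$$
the aim being the bound $\frac{d}{dt}\|w\|_{H^m}\le c\,\|w\|_{H^m}^2$ with $c=c(m)$.

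Each summand is a product of three factors carrying $j$, $k+1$ and $j-k$ derivatives. Placing $L^\infty$ on the factor with the fewest derivatives and $L^2$ on the other two, and using the Sobolev inequality $\|v\|_{L^\infty}\le\tfrac1{\sqrt2}\|v\|_{H^1}$, bounds a term by $C\|w\|_{H^m}^3$ provided the $L^\infty$-factor carries at most $m-1$ derivatives. Since $\min\{k+1,\,j-k\}\le\tfrac12(j+1)\le\tfrac12(m+1)$, which is $\le m-1$ for $m\ge2$, this is automatic for every term except the single top-order one, $j=k=m$; this is the range $m\ge2$ relevant to the applications, where $m\in\{q,p\}$.

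The main obstacle, exactly as in Lemma~\ref{lem:burgers-1}, is this top-order term $\int \partial_x^m w\,\partial_x^{m+1}w\,w\,dx$, in which one factor carries $m+1$ derivatives and so cannot be controlled by $\|w\|_{H^m}$ directly. I would dispose of it by integration by parts,
$$
\int \partial_x^m w\,\partial_x^{m+1}w\,w\,dx = \tfrac12\int \partial_x\bigl((\partial_x^m w)^2\bigr)\,w\,dx = -\tfrac12\int (\partial_x^m w)^2\,\partial_x w\,dx,
$$
which is bounded by $\tfrac12\|\partial_x w\|_{L^\infty}\|w\|_{H^m}^2\le C\|w\|_{H^m}^3$, using $H^m\hookrightarrow W^{1,\infty}$ for $m\ge2$. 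This is the same cancellation exploited in the last display of the proof of Lemma~\ref{lem:burgers-1}. Collecting all terms yields $\frac{d}{dt}\|w\|_{H^m}\le c\,\|w\|_{H^m}^2$.

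Finally I would close by comparison. Let $y$ solve $y'=c\,y^2$, $y(0)=M$, so that $y(t)=M/(1-cMt)$ and $y(t)\le 2M$ for $t\le\tfrac1{2cM}$. By the differential inequality above together with a standard continuation argument (established first for smooth data, where local classical solvability is available, and then passed to the limit), one obtains $\|w(t)\|_{H^m}\le y(t)\le 2M$ on $[0,\overline t(M)]$ with $\overline t(M)=\tfrac1{2cM}$. In particular the solution does not leave the $H^m$-ball of radius $2M$ before time $\overline t(M)$, so the flow $\Phi_B^t$ is well defined there, which is exactly the assertion of the lemma.
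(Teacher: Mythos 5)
Your proposal is correct and is essentially identical to the paper's own proof, which likewise runs the calculation \eqref{eq:kdv4} with $m$ in place of $p$ (including the integration-by-parts cancellation for the top-order term) to obtain $\norm{w}_{H^m}\frac{d}{dt}\norm{w}_{H^m}\le c\norm{w}_{H^m}^3$ and then compares with the majorizing equation $y'=cy^2$. Your explicit restriction to $m\ge 2$ is harmless, since the paper's own argument implicitly requires the same Sobolev embeddings and the lemma is only applied with $m\in\{q,p\}$, both $\ge 2$.
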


\begin{proof}
  The calculation \eqref{eq:kdv4} with $m$ instead of $p$, and the
  subsequent arguments, show  that
  $$
  \norm{w(t)}_{H^m}\frac{d}{dt} \norm{w(t)}_{H^m} \le c \norm{w(t)}_{H^m}^3,
  $$
  and hence the result follows by comparing with the majorizing
  differential equation $y'=cy^2$.
\end{proof}

Finally, we will make use of the following regularity result.

\begin{lemma}
  \label{lem:burgers-3}
  If $\| u_0 \|_{H^p}\le C_0$, then there exists $\overline t$
  depending only on $C_0$, such that the solution of the inviscid
  Burgers equation with initial data $u_0$, $w(t)=\Phi^t_B(u_0)$,
  satisfies
  $$
  w\in C^2([0,\overline t], H^q) \quad\hbox{ and }\quad w\in
  C^3([0,\overline t], H^r).
  $$
\end{lemma}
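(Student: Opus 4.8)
The plan is to establish both regularity statements by a time-differentiation bootstrap that trades exactly one order of spatial regularity for each time derivative. The starting observation is that the quadratic map $F(v):=vv_x$ is a bounded (hence $C^\infty$) map from $H^\sigma$ into $H^{\sigma-1}$ whenever $\sigma\ge 2$: indeed $v\in H^\sigma\subset H^{\sigma-1}$ and $v_x\in H^{\sigma-1}$, and $H^{\sigma-1}$ is a Banach algebra since $\sigma-1\ge 1$, so $\|F(v)\|_{H^{\sigma-1}}\le C\|v\|_{H^\sigma}^2$, with analogous bounds for its (bi)linear Fr\'echet derivatives. The Burgers solution satisfies $w_t=F(w)$, and this is the relation I would differentiate repeatedly.

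First I would fix the time interval. Applying Lemma~\ref{lem:burgers-2} with $m=p$ and $M=C_0$ gives $\overline t=\overline t(C_0)>0$ with $\|w(t)\|_{H^p}\le 2C_0$ for $0\le t\le\overline t$; combined with the local well-posedness of the inviscid Burgers equation in $H^p$ (valid since $p\ge 2\ell\ge 4>\tfrac32$), this yields $w\in C([0,\overline t],H^p)$. This is the base case $k=0$ of the induction $w\in C^k([0,\overline t],H^{p-k})$.

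For the inductive step, suppose $w\in C^k([0,\overline t],H^{p-k})$ with $p-k\ge 2$. Since $F\colon H^{p-k}\to H^{p-k-1}$ is $C^\infty$ and $w(\cdot)$ is $C^k$ into $H^{p-k}$, the chain rule for compositions of $C^k$ maps between Banach spaces shows that $t\mapsto F(w(t))=w_t(t)$ is $C^k$ into $H^{p-k-1}$, whence $w\in C^{k+1}([0,\overline t],H^{p-k-1})$. Iterating this step for $k=0,1,2$ (the binding constraint being $p-2\ge 2$, i.e.\ $p\ge 4$) produces in particular $w\in C^2([0,\overline t],H^{p-2})$ and $w\in C^3([0,\overline t],H^{p-3})$.

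Finally I would read off the claim from the index arithmetic. Since $\ell\ge 2$, one has $q=r+\ell-1\le p-2$ and $r\le p-3=r+2\ell-4$, so the continuous embeddings $H^{p-2}\hookrightarrow H^q$ and $H^{p-3}\hookrightarrow H^r$ (which preserve $C^k$ time-regularity) give $w\in C^2([0,\overline t],H^q)$ and $w\in C^3([0,\overline t],H^r)$. The only genuinely delicate point, and the one to which I would pay attention, is propagating true continuity rather than mere a priori bounds through each differentiation; this is precisely what the smoothness of the quadratic map $F$ between consecutive Sobolev spaces supplies, with the requirement $\sigma\ge 2$ at every stage guaranteed by $p\ge 2\ell\ge 4$.
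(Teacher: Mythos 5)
Your argument is correct, but it reaches the conclusion by a different mechanism than the paper. The paper never differentiates the equation directly: it introduces the explicit Taylor formula with integral remainder $\widetilde w(t)=u_0+tB(u_0)+\int_0^t(t-s)\,dB(w(s))[B(w(s))]\,ds$, notes that $dB(w)[B(w)]=w^2w_{xx}+2ww_x^2$ loses exactly two derivatives so that $\widetilde w$ is by inspection $C^2$ into $H^q$ (using, as you do, Lemma~\ref{lem:burgers-2} with $m=p$ to keep $w$ bounded in $H^p$ on $[0,\overline t\,]$), and then identifies $\widetilde w=w$ by matching $\widetilde w_{tt}=dB(w)[B(w)]=B(w)_t=w_{tt}$ together with the initial values $\widetilde w(0)=w(0)$, $\widetilde w_t(0)=w_t(0)$; the $C^3$ claim is handled the same way via $\widetilde w_{ttt}$. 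You instead bootstrap through the equation $w_t=B(w)$ itself, using smoothness of the quadratic map $B\colon H^\sigma\to H^{\sigma-1}$ for $\sigma\ge 2$ and the Banach-space chain rule, trading one spatial derivative per time derivative. The quantitative core is the same in both proofs (the $H^p$ bound on $[0,\overline t\,]$, the one-derivative loss in $B$, and the index arithmetic $q\le p-2$, $r\le p-3$), but your induction is more systematic and yields slightly more: $w\in C^2([0,\overline t\,],H^{p-2})$ and $w\in C^3([0,\overline t\,],H^{p-3})$, and in fact $C^k$ into $H^{p-k}$ whenever $p-k\ge 2$. What the paper's device buys is that its candidate $\widetilde w$ is differentiable by inspection, so one never has to discuss in which space the time-differentiated equation holds; in your proof that burden sits in the step ``whence $w\in C^{k+1}$'', which at $k=0$ requires the integral form $w(t)=u_0+\int_0^t B(w(s))\,ds$, valid in $H^{p-1}$ for a strong $H^p$ solution --- you gesture at this through local well-posedness of Burgers in $H^p$, and it would be worth making that one sentence explicit.
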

\begin{proof}
  If we define, for $t\in[0,\overline t]$ with $\overline t$ from
  Lemma~\ref{lem:burgers-2} for $m=p$,
  $$
  \widetilde w(t) = u_0 +tB(u_0) + \int_0^t (t-s) \, dB(w(s))[
  B(w(s))]\, ds
  $$
  and note\footnote{We adopt the notation from \cite{AmbrosettiProdi}.} $dB(w)[B(w)]=w^2w_{xx}+2ww_x^2$, then we have $\widetilde w\in
  C^2([0,\overline t],H^q)$. Since ${\widetilde
    w}_{tt}=dB(w)[B(w)]=B(w)_t$ and ${\widetilde
    w}_{t}(0)=B(u_0)=w_t(0)$ and ${\widetilde w}(0)=u_0=w(0)$, we have
  $\widetilde w=w$.  The second statement is proved in the same way,
  by computing $\widetilde{w}_{ttt}$.
\end{proof}

\section{Local error in $H^q$}

\begin{lemma} \label{lem:local-1} The local error of the Strang
  splitting is bounded in $H^q$ by
$$
\| \Psi^\dt(u_0)-\Phi_{A+B}^\dt(u_0) \|_{H^q} \le c_1 \dt^2,
$$
where $c_1$ only depends on $\|u_0\|_{H^{p}}$.
\end{lemma}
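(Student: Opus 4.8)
The plan is to start from the mild (Duhamel) representations of both flows. Writing $\Phi_A^t$ for the $C_0$-semigroup generated by $A=P(\partial_x)$, condition \eqref{P} gives the contractivity $\norm{\Phi_A^t v}_{H^s}\le\norm{v}_{H^s}$ for every $s\in\R$ and $t\ge0$, since $\Phi_A^t$ is the Fourier multiplier $e^{tP(i\xi)}$ with $\abs{e^{tP(i\xi)}}\le1$. With $u(s)=\Phi_{A+B}^s(u_0)$ and $v_0=\Phi_A^{\dt/2}(u_0)$ one has
\begin{align*}
 \Phi_{A+B}^\dt(u_0) &= \Phi_A^\dt u_0 + \int_0^\dt \Phi_A^{\dt-s} B(u(s))\,ds, \\
 \Psi^\dt(u_0) &= \Phi_A^\dt u_0 + \int_0^\dt \Phi_A^{\dt/2} B(\Phi_B^s v_0)\,ds,
\end{align*}
so the leading linear terms cancel and the local error equals $\int_0^\dt D(s)\,ds$ with $D(s)=\Phi_A^{\dt/2}B(\Phi_B^s v_0)-\Phi_A^{\dt-s}B(u(s))$. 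It therefore suffices to establish the uniform bound $\norm{D(s)}_{H^q}\le C\dt$ for $0\le s\le\dt$, with $C$ depending only on $\norm{u_0}_{H^p}$; integrating in $s$ then yields the claimed $c_1\dt^2$.

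To control $D(s)$ in $H^q$ I would Taylor-expand its three ingredients to first order: the two semigroups about the identity via $\Phi_A^\tau w - w=\int_0^\tau \Phi_A^\sigma A w\,d\sigma$, and the two inner arguments about their initial value using $\tfrac{d}{ds}B(\Phi_B^s v_0)=B'(\Phi_B^s v_0)[B(\Phi_B^s v_0)]$ and $\tfrac{d}{ds}B(u(s))=B'(u(s))[Au(s)+B(u(s))]$. The $C^1$- (indeed $C^2$-) regularity of the Burgers flow in $H^q$ and its short-time boundedness in $H^p$ from Lemmas~\ref{lem:burgers-2} and~\ref{lem:burgers-3}, together with \eqref{HYP}, keep every argument in an $H^p$-ball of radius $\le2\rho$, and each resulting term carries a factor $\dt$ or $s$. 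The only terms in which the order-$\ell$ operator $A$ meets the nonlinearity are the commutator combinations $AB(w)-B'(w)[Aw]$ with $w$ in that ball, and for these the key estimate is
$$\norm{AB(w)-B'(w)[Aw]}_{H^q}\le C\norm{w}_{H^p}^2 ,$$
which holds because, writing $B(w)=\tfrac12\partial_x(w^2)$ and $A=P(\partial_x)$, the only term that is not manifestly in $H^q$, namely $a_\ell\,w\,\partial_x^{\ell+1}w$ with $a_\ell$ the leading coefficient of $P$, occurs identically in $AB(w)$ and in $B'(w)[Aw]$ and hence cancels; all remaining terms contain at most $\partial_x^\ell w\in H^{p-\ell}=H^q$ and are bounded by the algebra property of $H^q$, valid since $q\ge2$. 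The purely nonlinear contributions $B'(w)[B(w)]$ cost only one derivative and lie in $H^{p-2}\subset H^q$, so they are harmless. Collecting these estimates gives $\norm{D(s)}_{H^q}\le C\dt$.

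The crux, and the only delicate point, is the bookkeeping of derivative losses. A naive splitting of $D(s)$, separating the semigroup difference $\Phi_A^{\dt/2}-\Phi_A^{\dt-s}$ from the difference $B(\Phi_B^s v_0)-B(u(s))$, fails: the former, applied to $B(\cdot)\in H^{p-1}$, is only $\Oh(\dt)$ in $H^{q-1}$ because $A$ costs $\ell$ derivatives while $B(\cdot)$ has merely $\ell-1$ to spare, whereas the latter would require closeness of the arguments in $H^{q+1}$, which $H^p$-data do not supply (the increment $v_0-u_0=\Phi_A^{\dt/2}u_0-u_0$ is $\Oh(\dt)$ in $H^q$ but not in $H^{q+1}$, as this would need $u_0\in H^{p+1}$). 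These two failures are exactly complementary, and the substance of the proof is to regroup so that every appearance of $A$ is absorbed into the commutator $AB(w)-B'(w)[Aw]$ above. Its cancellation lowers the loss from $\ell+1$ to exactly $\ell=p-q$ derivatives, which is precisely the regularity gap built into the choice of $p$ and $q$, and is what permits the local error to be measured in $H^q$ rather than in $H^{q-1}$. I expect the main technical effort to be organizing this regrouping cleanly, aligning the semigroup-expansion remainders, the flow-expansion remainders, and the commutator terms so that no term is ever left requiring more than $\ell$ spare derivatives.
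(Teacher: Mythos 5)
Your setup is sound where it is explicit: the two Duhamel representations, the reduction of the lemma to the uniform bound $\norm{D(s)}_{H^q}\le C\dt$ (which is equivalent to the paper's formulation of the principal term as a midpoint-rule quadrature error in Peano form), and the commutator estimate $\norm{AB(w)-dB(w)[Aw]}_{H^q}\le C\norm{w}_{H^p}^2$ with the cancellation of the top-order term are all correct and coincide with the paper's key ingredients. The genuine gap is at what you yourself call the crux: the regrouping that absorbs every appearance of $A$ into a commutator is never exhibited, and the scheme you propose for producing it --- Taylor-expanding the two semigroups about the identity via $\Phi_A^\tau w-w=\int_0^\tau\Phi_A^\sigma Aw\,d\sigma$ and the two inner arguments about their values at $s=0$ --- cannot produce it. Carrying out those expansions generates $A$-terms of two different shapes: $\int_0^{\dt/2}\Phi_A^\tau AB(\Phi_B^s v_0)\,d\tau$ and $-\int_0^{\dt-s}\Phi_A^\tau AB(u(s))\,d\tau$ from the semigroups, versus $\int_0^{\dt/2}dB(\Phi_A^\sigma u_0)[A\Phi_A^\sigma u_0]\,d\sigma$ and $-\int_0^{s}dB(u(\sigma))[Au(\sigma)]\,d\sigma$ from the arguments. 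These sit at four \emph{different} base points, with different outer conjugations and different integration ranges, so they do not pair into combinations $AB(w)-dB(w)[Aw]$ at a common $w$. Aligning the base points (say, moving them all to $u_0$) forces you to apply $A$, or to form $B$-differences, of increments such as $v_0-u_0$ or $u(\sigma)-\Phi_A^\sigma u_0$ that are only $\Oh(\dt)$ in $H^q$; that costs $\ell$ (respectively one) further derivatives, exactly the loss you identified as fatal. So the assertion in your second paragraph that ``the only terms in which $A$ meets the nonlinearity are the commutator combinations'' is precisely what your expansion fails to deliver.

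The device that makes the regrouping possible --- and the heart of the paper's proof --- is the twisted Duhamel formula \eqref{voc-b}: for $\varphi(\sigma)=e^{(s-\sigma)A}u(\sigma)$ one has $\dot\varphi(\sigma)=e^{(s-\sigma)A}B(u(\sigma))$, i.e.\ the $A$-contributions cancel exactly \emph{inside} this single composition, so that $B(u(s))-B(e^{sA}u_0)$ is an integral of purely $B$-type terms and $A$ never acts on a rough increment. All $A$-dependence is thereby confined to the coherent function $f(s)=e^{(\dt-s)A}B(e^{sA}u_0)$ of \eqref{f}, and the commutator emerges automatically, at the single base point $e^{sA}u_0$, upon differentiation: $f'(s)=-e^{(\dt-s)A}[A,B](e^{sA}u_0)$. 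In your language, the regrouping that works is
\begin{equation*}
D(s)=\Phi_A^{\dt/2}\bigl[B(\Phi_B^s v_0)-B(v_0)\bigr]
+\bigl[f(\tfrac12\dt)-f(s)\bigr]
+\Phi_A^{\dt-s}\bigl[B(e^{sA}u_0)-B(u(s))\bigr],
\end{equation*}
where the first and third brackets involve only $B$-differences of arguments that remain $\Oh(\dt)$-close in $H^{p-1}\subset H^{q+1}$ (using Lemmas~\ref{lem:burgers-2} and~\ref{lem:burgers-3} together with \eqref{HYP}, and $\ell\ge 2$), hence are $\Oh(\dt)$ in $H^q$, and the middle bracket is $\Oh(\dt)$ in $H^q$ by your commutator bound; integrating the three brackets over $s\in[0,\dt]$ reproduces, respectively, the paper's terms $e_2$, the midpoint quadrature error, and $-e_1$. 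With this replacement your argument closes; without it, it does not.
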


\begin{proof} The proof follows that of the local error bound in
  \cite[\S 4.4]{Lubich:2008}. We write $e^{tA}v=\Phi^t_A(v)$ to
  indicate the linearity of the flow of $A$. We start from the
  variation-of-constants formula for
  $u(t)=\Phi^t_{A+B}(u_0)$,
  \begin{equation}
    \label{voc}
    u(t)=e^{tA}u_0 + \int_0^t e^{(t-s)A} B(u(s))\, ds,
  \end{equation}
  which is just the formula $\phi(t)-\phi(0)=\int_0^t \dot\phi(s)\,ds$ for 
  $\phi(s)=e^{(t-s)A}u(s)$, 
  and its sibling formula
  \begin{equation}
    \label{voc-b}
    B(u(s)) = B(e^{sA}u_0) + \int_0^s dB(e^{(s-\sigma) A} u(\sigma))[e^{(s-\sigma) A}B(u(\sigma))]\, d\sigma,
  \end{equation}
  which is nothing but the formula
  $B(\varphi(s))-B(\varphi(0))=\int_0^s
  dB(\varphi(\sigma))[\dot\varphi(\sigma)]\,d\sigma$ for
  $\varphi(\sigma)=e^{(s-\sigma) A} u(\sigma)$. We insert
  \eqref{voc-b} into \eqref{voc} with $t=\dt$ to obtain
$$
u(\dt)=e^{\dt A}u_0 + \int_0^\dt e^{(\dt-s)A} B(e^{sA}u_0)\, ds + e_1
$$
with
\begin{equation}
  \label{e1}
  e_1 = \int_0^\dt \int_0^s e^{(\dt-s)A}dB(e^{(s-\sigma) A} u(\sigma))[e^{(s-\sigma) A}B(u(\sigma))]\, d\sigma \, ds.
\end{equation}
On the other hand, for the result after one step of the Strang
splitting,
$$
u_1 = \Psi^\dt(u_0)= e^{\dt A/2} \Phi^\dt_B(e^{\dt A/2}u_0),
$$
we use the first-order Taylor expansion with integral remainder term
in $H^q$,
\begin{equation}
  \label{taylor}
  \Phi^\dt_B (v) = v + \dt B(v) + \dt^2 
  \int_0^1 (1-\theta) dB(\Phi^{\theta\dt}_B (v))[B(\Phi^{\theta\dt}_B (v))]\, d\theta.
\end{equation}
This is justified for $v=e^{\dt A/2}u_0\in H^p$ and for sufficiently
small $\dt$ by Lemma~\ref{lem:burgers-3}. We therefore obtain
$$
u_1= e^{\dt A} u_0 + \dt e^{\dt A/2} B(e^{\dt A/2}u_0) + e_2
$$
with
\begin{equation}
  \label{e2}
  e_2 = \dt^2 \int_0^1 (1-\theta) e^{\dt A/2}
  dB(\Phi^{\theta\dt}_B (e^{\dt A/2}u_0))[B(\Phi^{\theta\dt}_B (e^{\dt A/2}u_0))]\, d\theta.
\end{equation}
The error thus becomes
\begin{equation}
  \label{err1}
  u_1-u(\dt) = \dt\, e^{\dt A/2} B(e^{\dt A/2}u_0) - \int_0^\dt e^{(\dt-s)A} B(e^{sA}u_0)\, ds + (e_2-e_1),
\end{equation}
and hence the principal error term is just the quadrature error of the
midpoint rule applied to the integral over $[0,\dt]$ of the function
\begin{equation}
  \label{f}
  f(s) = e^{(\dt-s)A} B(e^{sA}u_0).
\end{equation}
We express the quadrature error in first-order Peano form,
$$
\dt\, f(\tfrac12\dt) -\int_0^\dt f(s) \, ds = \dt^2 \int_0^1
\kappa_1(\theta)\, f'(\theta\dt)\, d\theta,
$$
where $\kappa_1$ is the real-valued, bounded Peano kernel of the
midpoint rule. We find
$$
f'(s) = -e^{(\dt-s)A} [A,B](e^{sA}u_0)
$$
with the Lie commutator
$$
[A,B](v) = dA(v)[B(v)] - dB(v)[Av] = P(\partial_x)(vv_x) -
(P(\partial_x)v)v_x - vP(\partial_x)v_x.
$$
We have that $P$ is linear in $\partial^k_x$ for $k\le \ell$, and we
compute
\begin{align*}
   (\partial_x^\ell)(vv_x) -
(\partial_x^\ell v)v_x - v\partial_x^\ell v_x &=\sum_{k=0}^\ell
{{\ell}\choose{k}} \partial_x^{k}v\partial^{\ell+1-k}_x v -
(\partial_x^\ell v)v_x - v\partial_x^{\ell+1}v \\
&= \sum_{k=1}^{\ell-1} {{\ell}\choose{k}} \partial_x^{k}v\partial^{\ell+1-k}_x v.
\end{align*}
Hence the terms containing derivatives of order $\ell+1$ disappear, and we
obtain the commutator bound
$$
\| [A,B](v) \|_{H^{q}} \le C \bigl( \|v\|_{L^\infty} \| v \|_{H^p} +
\|v_x\|_{L^\infty} \| v \|_{H^{p}}+ \| v \|_{H^{p-1}}^2\bigr) \le C
\| v \|_{H^{p}}^2.
$$
Since $e^{tA}$ does not increase the Sobolev norms,
it follows that
$$
\| f'(s) \|_{H^q} \le C\| u_0 \|_{H^{p}}^2,
$$
and hence the quadrature error is $O(\dt^2)$ in the $H^q$ norm for
$u_0\in H^p$.  The $H^q$ norm of the remainder term $e_2-e_1$ is
bounded by $C\dt^2$ for $u_0\in H^p$ for sufficiently small $\dt$ (by
using Lemma~\ref{lem:burgers-2}). We include the details for the
convenience of the reader:
\begin{align*}
  \norm{e_1}_{H^q}&\le   \int_0^\dt \int_0^s
  \norm{e^{(\dt-s)A}dB(e^{(s-\sigma) A} u(\sigma))[e^{(s-\sigma)
      A}B(u(\sigma))]}_{H^q}\, d\sigma \, ds\\ 
  &\le \int_0^\dt \int_0^s\norm{\Big(\big(e^{(s-\sigma) A}
    u(\sigma)\big)\big(e^{(s-\sigma)
      A}B(u(\sigma))\big)\Big)_x}_{H^q}\, d\sigma \, ds\\ 
  &\le C\int_0^\dt \int_0^s\norm{u(\sigma)}_{H^{q+1}}
  \norm{B(u(\sigma))}_{H^{q+1}}\, d\sigma \, ds\\ 
  &\le C\int_0^\dt \int_0^s\norm{u(\sigma)}_{H^{q+1}}
  \norm{u(\sigma)}_{H^{q+1}}\ \norm{u(\sigma)}_{H^{q+2}}\, d\sigma \,
  ds\\ 
  &\le C \dt^2 R^3, \\
  \intertext{and} \norm{e_2}_{H^q}&\le \dt^2 \int_0^1 (1-\theta)
  \norm{e^{\dt A/2}
    dB(\Phi^{\theta\dt}_B (e^{\dt A/2}u_0))[B(\Phi^{\theta\dt}_B
    (e^{\dt A/2}u_0))]}_{H^q}\, d\theta\\ 
  &\le \dt^2 \int_0^1 \norm{\Big((\Phi^{\theta\dt}_B (e^{\dt
      A/2}u_0))(B(\Phi^{\theta\dt}_B (e^{\dt
      A/2}u_0)))\Big)_x}_{H^q}\, d\theta\\ 
  &\le\dt^2 C\int_0^1  \norm{\Phi^{\theta\dt}_B (e^{\dt
      A/2}u_0)}_{H^{q+1}} \norm{B(\Phi^{\theta\dt}_B (e^{\dt
      A/2}u_0))}_{H^{q+1}}\, d\theta\\ 
  &\le\dt^2 C\int_0^1 \norm{\Phi^{\theta\dt}_B (e^{\dt
      A/2}u_0)}_{H^{q+1}}^2
  \norm{\Phi^{\theta\dt}_B (e^{\dt A/2}u_0)}_{H^{q+2}}\, d\theta\\
  &\le C\dt^2 R^3.
\end{align*}
Thus the proof is complete.
\end{proof}

\section{Proof of Theorem~\ref{thm:one}}
The proof uses ``Lady Windermere's fan'' with error propagation by the
exact flow \cite[p.\,160, Fig.\,3.1]{HairerNorsettWanner:1993}. In our approach
the necessary regularity for estimating local errors by
Lemma~\ref{lem:local-1} is ensured by Lemma~\ref{lem:burgers-1} via an
induction argument, which we present next.

We make the induction hypothesis that for $k\le n-1$,
\begin{eqnarray*}
  &&\| u_k \|_{H^q} \le R, \quad\ 
  \| u_k \|_{H^p} \le e^{2cRk\dt}\|u_0 \|_{H^p} \le C_0 \\
  &&\| u_k - u(t_k) \|_{H^q} \le \gamma \dt,
\end{eqnarray*}
where $C_0=e^{2cRT}\|u_0 \|_{H^p}$ with $c$ from
Lemma~\ref{lem:burgers-1}, and $\gamma = K(R,T) c_1(C_0) T$ with
$K(R,T)$ from the local Lipschitz bound \eqref{lip-flow} and
$c_1(C_0)$ is the constant of Lemma~\ref{lem:local-1} for starting
values bounded by $C_0$ in $H^p$.  We then show that the above bounds
also hold for $k=n$ as long as $n\dt\le T$ and $\dt$ is sufficiently
small.

We denote, with $\Phi^t=\Phi^t_{A+B}$ for brevity,
$$
u_n^k = \Phi^{(n-k)\dt}(u_k),
$$
which is the value at time $t_n$ of the exact solution of \eqref{pde}
starting with initial data $u_k$ at time $t_k$. We note
$$
u_n = u_n^n, \quad u(t_n)=u_n^0.
$$
We estimate
\begin{eqnarray*}
  \| u_n - u(t_n) \|_{H^q} &\le& \sum_{k=0}^{n-1} \| u_n^{k+1} - u_n^k  \|_{H^q}
  \\
  &=& \sum_{k=0}^{n-1} \| \Phi^{(n-k-1)\dt}(\Psi^\dt(u_k)) - 
  \Phi^{(n-k-1)\dt}(\Phi^\dt(u_k))  \|_{H^q}.
\end{eqnarray*}
For $k\le n-2$ we have $\|\Psi^\dt(u_k)\|_{H^q} = \|u_{k+1}\|_{H^q}
\le R$, and
\begin{eqnarray*}
  \| \Phi^\dt(u_k) \|_{H^q} &\le& \| \Phi^\dt(u_k) - \Phi^\dt(u(t_k)) \|_{H^q} +
  \| \Phi^\dt(u(t_k)) \|_{H^q}
  \\
  &\le & K(R,\dt)\| u_k - u(t_k) \|_{H^q} + \| u(t_{k+1}) \|_{H^q} 
  \\
  &\le & K(R,\dt)\gamma \dt + \rho,
\end{eqnarray*}
which is bounded by $R$ if $\dt$ is so small that
$$
K(R,\dt)\gamma \dt\le R-\rho.
$$
Using \eqref{lip-flow} and Lemma~\ref{lem:local-1} we therefore have,
for $k\le n-1$ and $n\dt\le T$,
\begin{eqnarray*}
  && \| \Phi^{(n-k-1)\dt}(\Psi^\dt(u_k)) - 
  \Phi^{(n-k-1)\dt}(\Phi^\dt(u_k))  \|_{H^q} 
  \\
  && \qquad \leq
  K(R,T) \| \Psi^\dt(u_k)- \Phi^\dt(u_k)\|_{H^q} 
  \\
  && \qquad \le 
  K(R,T) c_1(C_0) \dt^2.
\end{eqnarray*}
With this estimate we obtain, again noting $n\dt\le T$,
$$
\| u_n - u(t_n) \|_{H^q} \le n K(R,T) c_1(C_0) \dt^2 \le \gamma \dt.
$$
With $\gamma \dt\le R-\rho$, we have
$$
\| u_n \|_{H^q} \le R.
$$
Since $\|\Phi^t_A(v)\|_{H^p} \le \| v \|_{H^p}$, Lemmas~\ref{lem:burgers-1}
and~\ref{lem:burgers-2} yield, for $\dt\le \overline t(R)$,
$$
\| u_n \|_{H^p} = \| \Phi^{\dt/2}_A\circ \Phi^{\dt}_B\circ
\Phi^{\dt/2}_A (u_{n-1})\|_{H^p} \le e^{2cR\dt} \|u_{n-1}\|_{H^p} \le
e^{2cRn\dt} \|u_0\|_{H^p} .
$$
This completes the proof of Theorem~\ref{thm:one}.

\section{Local error in $H^{r}$}

\begin{lemma}\label{lem:local-2}
  The local error of the Strang splitting is bounded in $H^r$ by
$$
\| \Psi^\dt(u_0)-\Phi_{A+B}^\dt(u_0) \|_{H^{r}} \le c_2 \dt^3,
$$
where $c_2$ only depends on $\|u_0\|_{H^p}$.
\end{lemma}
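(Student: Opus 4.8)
The plan is to follow the decomposition already used in Lemma~\ref{lem:local-1}, but to extract one additional power of $\dt$ from each of its two ingredients. Recall from~\eqref{err1} that the local error is the sum of the midpoint quadrature error for the function $f$ in~\eqref{f} and the remainder $e_2-e_1$ built from~\eqref{e1} and~\eqref{e2}; in the $H^q$ estimate both pieces were merely $O(\dt^2)$, and the task now is to show that in the weaker norm $H^r$ each is in fact $O(\dt^3)$.

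For the quadrature error I would exploit that the midpoint rule is exact for affine integrands and pass to the second Peano form,
\[
\dt\, f(\tfrac12\dt)-\int_0^\dt f(s)\,ds=\dt^3\int_0^1\kappa_2(\theta)\,f''(\theta\dt)\,d\theta,
\]
with $\kappa_2$ the bounded second Peano kernel. Differentiating the identity $f'(s)=-e^{(\dt-s)A}[A,B](e^{sA}u_0)$ once more gives $f''(s)=e^{(\dt-s)A}[A,[A,B]](e^{sA}u_0)$ with the iterated Lie commutator. As in the single–commutator computation the top–order terms cancel: for the leading part $\partial_x^\ell$ of $P$ one obtains a sum of products $\partial_x^a v\,\partial_x^b v$ with $a+b=2\ell+1$ and $2\le a,b\le 2\ell-1$ (the lower–order monomials of $P$ contributing only commutators of smaller order). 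The highest derivative is therefore of order $2\ell-1$, so with $p=r+2\ell-1$ and the algebra property of $H^r$ (for $r\ge1$) one gets $\norm{[A,[A,B]](v)}_{H^r}\le C\norm{v}_{H^p}^2$. Since $e^{tA}$ is a contraction on every $H^s$, this yields $\norm{f''(s)}_{H^r}\le C\norm{u_0}_{H^p}^2$ and hence an $O(\dt^3)$ bound for the quadrature error in $H^r$.

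For the remainder I would first put both pieces in a transparent form. Collapsing the inner integral in~\eqref{e1} by~\eqref{voc-b} gives $e_1=\int_0^\dt e^{(\dt-s)A}\bigl(B(u(s))-B(e^{sA}u_0)\bigr)\,ds$, while~\eqref{taylor} identifies $e_2=e^{\dt A/2}\bigl(\Phi^\dt_B(v)-v-\dt B(v)\bigr)$ with $v=e^{\dt A/2}u_0$ as the second–order Taylor remainder of the Burgers flow. Each is only $O(\dt^2)$ individually, but their principal parts coincide: using $\tfrac{d}{ds}\big|_{0}\!\bigl(B(u(s))-B(e^{sA}u_0)\bigr)=dB(u_0)[B(u_0)]$ and $\int_0^1(1-\theta)\,d\theta=\tfrac12$, both equal $\tfrac12\dt^2\,dB(u_0)[B(u_0)]$, and these cancel in $e_2-e_1$. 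What survives is governed by the companion double commutator $[B,[A,B]]$, whose top–order terms again cancel (the $\partial_x^{\ell+1}$ contributions in $dB(v)[[A,B](v)]$ and $d[A,B](v)[B(v)]$ agree), leaving a cubic expression of maximal derivative order~$\ell$ with $\norm{[B,[A,B]](v)}_{H^r}\le C\norm{v}_{H^p}^3$.

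The main obstacle is precisely turning this cancellation into a rigorous $O(\dt^3)$ bound in $H^r$. A naive term–by–term expansion of $e_1$ is not admissible: differentiating $B(u(s))-B(e^{sA}u_0)$ twice produces contributions in which $A^2$ acts on $u(s)-e^{sA}u_0$, costing up to $2\ell$ spatial derivatives that the $H^p$ data cannot absorb. The point is that these dangerous terms are not present in the \emph{difference} $e_2-e_1$: they cancel against the corresponding high–order terms generated through $v=e^{\dt A/2}u_0$ and the outer $e^{\dt A/2}$ in $e_2$, and what remains reorganizes into the commutators $[A,[A,B]]$ and $[B,[A,B]]$ whose top orders vanish. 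Concretely, I would expand both the exact flow $\Phi^\dt_{A+B}(u_0)$ and the split flow $\Psi^\dt(u_0)$ to second order in $\dt$, verify that their non–commutator parts agree, and bound the residual commutator terms using the two double–commutator estimates above, the $H^p$–bounds on the occurring flows from Lemma~\ref{lem:burgers-2} and~\eqref{HYP}, and the time–regularity of the Burgers flow from Lemma~\ref{lem:burgers-3}. Combining this with the quadrature estimate yields $\norm{\Psi^\dt(u_0)-\Phi_{A+B}^\dt(u_0)}_{H^r}\le c_2\dt^3$ with $c_2$ depending only on $\norm{u_0}_{H^p}$.
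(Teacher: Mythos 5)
Your first half --- the midpoint quadrature error written in second-order Peano form, $f''(s)=e^{(\dt-s)A}[A,[A,B]](e^{sA}u_0)$, the cancellation of the derivatives of orders $2\ell+1$ and $2\ell$ in the double commutator, and the resulting bound $\norm{f''(s)}_{H^r}\le C\norm{u_0}_{H^p}^2$ --- is exactly the paper's argument and is correct.

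The gap is in your treatment of $e_2-e_1$, which is where the real difficulty of the lemma sits. Your pivotal claim is that $e_1$ and $e_2$ share the principal part $\tfrac12\dt^2\,dB(u_0)[B(u_0)]$ up to $O(\dt^3)$ in $H^r$, so that it cancels in the difference. Extracting this from $e_2=e^{\dt A/2}\bigl(\Phi_B^\dt(v)-v-\dt B(v)\bigr)$ with $v=e^{\dt A/2}u_0$ forces you to replace $e^{\dt A/2}$ and $v$ by the identity and $u_0$, i.e.\ to expand the exponential in powers of $A$: already the first correction $(e^{\dt A/2}-I)\,dB(v)[B(v)]$ costs $\ell$ derivatives on top of the two contained in $dB(v)[B(v)]$, hence requires control of $\norm{u_0}_{H^{r+\ell+2}}$, which exceeds $p=r+2\ell-1$ when $\ell=2$ (viscous Burgers). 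Worse, for $e_1$ the second $s$-derivative of $B(u(s))-B(e^{sA}u_0)$ contains $dB(u(s))[A^2u(s)]-dB(e^{sA}u_0)[A^2e^{sA}u_0]$, costing $2\ell+1>p-r$ derivatives for \emph{every} $\ell$, and these two terms are evaluated at different functions, so they do not cancel to the required order. You acknowledge precisely this (``a naive term-by-term expansion of $e_1$ is not admissible''), but your proposed remedy --- expand both flows to second order in $\dt$ and check that the non-commutator parts agree --- \emph{is} that inadmissible expansion: at the stated regularity the intermediate terms you wish to cancel are not bounded in $H^r$, so there is nothing rigorous to cancel. The missing idea is a device that realizes the cancellation while every factor $e^{tA}$ stays intact as a norm contraction. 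The paper's device is to apply the variation-of-constants identity a second time, inside \eqref{e1}, to $G_{s,\sigma}(v)=dB(e^{(s-\sigma)A}v)[e^{(s-\sigma)A}B(v)]$; this exhibits the principal part of $e_1$ as $\int_0^\dt\!\int_0^s g(s,\sigma)\,d\sigma\,ds$ with $g(s,\sigma)=e^{(\dt-s)A}dB(e^{sA}u_0)[e^{(s-\sigma)A}B(e^{\sigma A}u_0)]$, up to a triple-integral remainder of size $\dt^3$, while the second-order Taylor expansion of $\Phi_B^\dt$ exhibits the principal part of $e_2$ as $\tfrac12\dt^2\,g(\tfrac12\dt,\tfrac12\dt)$ --- the \emph{same} function $g$. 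The difference is then a first-order two-dimensional quadrature error over the triangle, bounded by $C\dt^3\bigl(\max\norm{\partial g/\partial s}_{H^r}+\max\norm{\partial g/\partial\sigma}_{H^r}\bigr)$, and in these partial derivatives the cancellation occurs at the single-commutator level ($-A(vw)+(Av)w+vAw$ and $-A(vv_x)+(vAv)_x$), which fits within $H^p$ for all $\ell\ge2$; your double commutator $[B,[A,B]]$ never needs to be formed. Without this device, or an equivalent one, your $O(\dt^3)$ bound for $e_2-e_1$ remains an assertion rather than a proof.
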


\begin{proof}
  The proof follows the lines of \cite[\S 5.2]{Lubich:2008}.  Instead
  of \eqref{taylor} we now use the second-order Taylor expansion
  \begin{align*}
    \Phi^\dt_B (v) &= v + \dt B(v) + \tfrac12\dt^2 dB(v)[B(v)]
    \\
    &\quad + \dt^3 \int_0^1 \tfrac12(1-\theta)^2 \Big( d^2B(\Phi^{\theta\dt}_B(v))[B(\Phi^{\theta\dt}_B(v)),B(\Phi^{\theta\dt}_B(v))] \\
    &\qquad\qquad\qquad\qquad\qquad +
    dB(\Phi^{\theta\dt}_B(v))\big[ dB(\Phi^{\theta\dt}_B(v))[B(\Phi^{\theta\dt}_B(v))]\big]\Big)\, d\theta
    \end{align*}
where henceforth we abbreviate the integral remainder term as 
$$
\dt^3 \int_0^1 \tfrac12(1-\theta)^2 
    \Big( d^2B(B,B) +dB \, dB \, B \Big)\big(\Phi^{\theta\dt}_B(v)\big) 
 \, d\theta    .
 $$
  Hence,
  \begin{align*}
    u_1&=e^{\dt A}u_0 + \dt e^{\dt A/2}B\big(e^{\dt A/2}u_0\big) +
    \tfrac{1}2\dt^2 e^{\dt A/2}dB\big(e^{\dt A/2}u_0\big)[B\big(e^{\Dt A/2}u_0\big)]  \\
    &\quad + \dt^3 \int_0^1 \tfrac12(1-\theta)^2 e^{\dt A/2}     
     \Big( d^2B(B,B) +dB \, dB \, B \Big)
\big(\Phi^{\theta\dt}_B(e^{\dt A/2}u_0)\big) \, d\theta\\
    &= e^{\dt A}u_0 + \dt e^{\dt A/2}B\big(e^{\dt A/2}u_0\big) +
    e_2,
  \end{align*}
  where $e_2$ is given by \eqref{e2}.
  
  In \eqref{e1} we express the integrand by a formula of the type
  \eqref{voc-b} by using
  \begin{equation*}
    G(u(\sigma)) = G(e^{\sigma A}u_0) + \int_0^\sigma
    dG(e^{(\sigma-\tau) A} u(\tau))[e^{(\sigma-\tau) A}B(u(\tau))]\,
    d\tau
  \end{equation*}
  for the last part of the integrand in \eqref{e1}, with
  \begin{equation*}
    G(v)=G_{s,\sigma}(v)=dB(e^{(s-\sigma)A}v)[ e^{(s-\sigma)A}\, B(v)],
  \end{equation*}
  and
  \begin{align*}
    dG(v)[w]=&\,d^2B\big(e^{(s-\sigma)A}v\big)\big[e^{(s-\sigma)A}w,e^{(s-\sigma)A}B(v)\big] 
    \\ &+ 
    dB\big(e^{(s-\sigma)A}v\big)\big[e^{(s-\sigma)A} dB(v)[w]\big].
  \end{align*}
  Then we obtain,
  \begin{align*}
    e_1&=\int_0^{\dt} \int_0^s e^{(\dt-s)A}dB\left(e^{sA}u_0\right)
    [e^{(s-\sigma)A}B\left(e^{sA}u_0\right)]\,d\sigma ds\\
    &\qquad+ \int_0^{\dt} \int_0^s \int_0^\sigma
    dG_{s,\sigma}\big(e^{(\sigma-\tau)A}u(\tau)\big)\big[e^{(\sigma-\tau)A}
    B(u(\tau))\big] \,d\tau d\sigma ds.
  \end{align*}

  We return to the error formula \eqref{err1} and write the principal
  error term
  \begin{align*}
    \dt \, e^{\dt A/2}B\big(e^{\dt A/2}u_0\big) - \int_0^\dt
    e^{(\dt-s)A} B(e^{sA}u_0)\, ds
  \end{align*}
  in second-order Peano form
  \begin{equation*}
    \dt f(\tfrac12\dt) -\int_0^\dt f(s)\,ds = \dt^3 \int_0^1
    \kappa_2(\theta)\, f''(\theta\dt)\, d\theta
  \end{equation*}
  with the second-order Peano kernel $\kappa_2$ of the midpoint rule
  and $f$ of \eqref{f}. We have
  \begin{equation*}
    f''(s) = e^{(\dt-s)A}[A,[A,B]](e^{sA}u_0).
  \end{equation*}
  The double commutator reads
  \begin{align*}
    [A,[B,A]](v)&=[A,dA B(v) - dB(v)A(v)](v)\\
    &=dA^2 B(v) - dA dB(v)A(v) - d^2A B(v) A(v) \\
    &\qquad - dA dB(v) A(v) +
    d^2B(A(v))^2 + dB(v)) dA A(v) \\
    &=A^2(B(v)) - 2A(dB(v)A(v)) + d^2B(A(v))^2 + dB(v)A^2(v)\\
    &=A^2(vv_x) - 2A(vAv)_x + ((Av)^2)_x + (vA^2v)_x,
  \end{align*}
  since $d^2A=0$.  The operator $A$ is linear in $\partial^\ell_x$,
  and the highest order derivative in the term $((Av)^2)_x$ is
  $\ell+1\le 2\ell-1$, so we consider
  \begin{align*}
    \partial^{2\ell}_x(vv_x) &- 2\partial^{\ell+1}_x\left(v\pl
      v\right)+
    (v\partial^{2\ell}_xv)_x \\
    &= \sum_{j=0}^{2\ell}
    {{2\ell}\choose{j}} \partial^{2\ell-j}_xv\partial^{j+1}_x v -
    2\sum_{j=0}^{\ell+1} {{\ell+1}\choose{j}} \partial^{\ell+1-j}_x
    v \partial^{\ell+j}_x v + v_x \partial^{2\ell}_x v +
    v\partial^{2\ell+1}v.
  \end{align*}
  We note that in the above sum, the terms containing derivatives of
  order $2\ell+1$ and $2\ell$ disappear, so
  \begin{equation*}
    \| f''(s) \|_{H^r} \le C \| u_0 \|_{H^p}^2.
  \end{equation*}
  Now for the difference of \eqref{e1} and \eqref{e2},
  \begin{align*}
    e_2 -e_1 &= \tfrac12 \dt^2g(\tfrac12\dt,\tfrac12\dt) -
    \int_0^\dt\int_0^s g(s,\sigma)\,d\sigma\, ds + \tilde e_2 -
    \tilde e_1,
    \intertext{where}
      g(s,\sigma) &= e^{(\dt-s)A} dB( e^{s A}u_0)\, e^{(s-\sigma)A}B(e^{\sigma A}u_0)
    \intertext{and}
    \tilde{e}_1 &= \int_0^{\dt} \int_0^s \int_0^\sigma
    dG_{s,\sigma}\left(e^{(\sigma-\tau)A}u(\tau)\right) e^{(\sigma-\tau)A}
    B(u(\tau)) \,d\tau d\sigma ds, \intertext{and} \tilde{e}_2 &=
    \dt^3 \int_0^1 \tfrac12(1-\theta)^2 e^{\dt A/2} \Bigl( d^2B(B,B) +
    dB \,dB \, B\Bigr)(\Phi^{\theta\dt}_B(u_0))\, d\theta.
  \end{align*}
  To estimate the remainder terms $\tilde e_i$ we calculate
  \begin{align*}
    \norm{dG_{s,\sigma}(v)w}_{H^r}&\le
    \norm{d^2B\left(e^{(s-\sigma)A}B(v),w\right)}_{H^r}+
    \norm{dB\left(e^{(s-\sigma)A} v
      \right)e^{(s-\sigma)A}dB(v)w}_{H^r}\\
    &\le
    C\left(\norm{B(v)}_{H^{r+1}}\norm{w}_{H^{r+1}}+\norm{v}_{H^{r+1}}
      \norm{dB(v)w}_{H^{r+1}}\right)\\
    &\le C\left(\norm{v}_{H^{r+2}}^2 \norm{w}_{H^{r+1}} +
      \norm{v}_{H^{r+1}}\norm{v}_{H^{r+2}} \norm{w}_{H^{r+2}}\right)\\
    &\le C\norm{v}_{H^{r+2}}^2\norm{w}_{H^{r+2}},
  \end{align*}
  and
  \begin{align*}
    \norm{\left(d^2B(B,B) + dB\, dB\, B\right)(v)}_{H^r}&\le
    \norm{d^2B(B(v),B(v))}_{H^r}+ \norm{dB(v)dB(v)B(v)}_{H^r}\\
    &\le C\left(\norm{B(v)}_{H^{r+1}}^2 +
      \norm{v}_{H^{r+1}}\norm{dB(v)B(v)}_{H^{r+1}}\right) \\
    &\le C\left(\norm{v}_{H^{r+2}}^4 +
      \norm{v}_{H^{r+2}}^2\norm{B(v)}_{H^{r+2}}\right) \\
    &\le C \norm{v}_{H^{r+3}}^4.
  \end{align*}
  Then, using Lemma~\ref{lem:burgers-1}, the
  remainder terms are bounded by
  \begin{equation*}
    \norm{ \tilde e_1}_{H^r} + \norm{\tilde e_2}_{H^r} \le C \dt^3
    \big( \|u_0\|_{H^p}^4+\|u_0\|_{H^p}^3\big),
  \end{equation*}
  since $2\ell -1 \ge 3$.
  
  The first two terms in $e_2-e_1$ are the quadrature error of a
  first-order two-dimensional quadrature formula, which is bounded by
  \begin{align*}
    & \Bigl\| \tfrac12 \dt^2g(\tfrac12\dt,\tfrac12\dt) -
    \int_0^\dt\int_0^s g(s,\sigma)\,d\sigma\, ds \Bigr\|_{H^r}
    \\
    &\qquad\qquad \le C \dt^3 \bigl( \max \| \partial g/\partial
    s\|_{H^r} + \max \| \partial g/\partial\sigma \|_{H^r}\bigr),
  \end{align*}
  where the maxima are taken over the triangle
  $\{(s,\sigma):0\le\sigma\le s \le \dt\}$.  In order to estimate the
  partial derivatives we write
  \begin{equation*}
    g(s,\sigma)=e^{(\dt-s)A} dB(v(s)) w(s,\sigma),
  \end{equation*}
  where
  \begin{equation*}
    v(s)=e^{sA}u_0\ \text{ and }\ w(s,\sigma)=e^{(s-\sigma)A}B(v(\sigma)).
  \end{equation*}
  With this notation
  \begin{align*}
    \frac{\partial g}{\partial s}&= e^{(\dt-s)A}
    \left(-AdB(v(s))w(s,\sigma)) +d^2B(Av(s),w(s,\sigma))+
      dB(v(s))Aw(s,\sigma)\right)\\
    &= e^{(\dt-s)A}\left(-A(v(s)w(s,\sigma))+Av(s) w(s,\sigma)+
      v(s)Aw(s,\sigma)\right)_x.
  \end{align*}
  Since the flow determined by $A$ does not increase the Sobolev
  norms, it suffices to estimate the $H^{r+1}$ norm of
  $-A(vw)+(Av)w+vAw$. We see that the derivatives of order $\ell$
  vanish, so that
  \begin{equation*}
    \norm{ \frac{\partial g}{\partial s}}_{H^{r}} \le
    C\norm{v(s)}_{H^{r+\ell}} \norm{w(s,\sigma)}_{H^{r+\ell}} \le
    C\norm{u_0}_{H^{r+\ell}} \norm{u_0}_{H^{r+\ell+1}}^2\le C\norm{u_0}_{H^{p}}^3,
  \end{equation*}
  since $\ell\ge 2$. For the other partial derivative, we get
  \begin{align*}
    \frac{\partial g}{\partial \sigma} &= e^{(\dt-s)A}
    dB(v(s))\left(e^{(s-\sigma)A}\left(-AB(v(s))
        +dB(v(\sigma))Av(\sigma)\right)\right),
  \end{align*}
  so that
  \begin{equation*}
    \norm{ \frac{\partial g}{\partial \sigma} }_{H^r}\le C
    \norm{v(s)}_{H^{r+1}} \norm{-AB(v(s))
      +dB(v(\sigma))Av(\sigma)}_{H^{r+1}}
  \end{equation*}
  The last factor is $-A(vv_x) + (vAv)_x$, and again the derivatives
  of order $\ell+1$ vanish, so that
  \begin{equation*}
    \norm{ \frac{\partial g}{\partial \sigma} }_{H^r}\le C \norm{u_0}_{H^p}^3.
  \end{equation*}
  Therefore we obtain
  \begin{equation*}
    \norm{e_2 - e_1}_{H^r} \le C \dt^3\big( \|u_0\|_{H^p}^4+ \|u_0\|_{H^p}^3\big),
  \end{equation*}
  which together with the bound for the quadrature error of the
  midpoint rule for $f$ yields the stated result.
\end{proof}

\section{Proof of Theorem~\ref{thm:two}}

We use ``Lady Windermere's fan'' in $H^{r}$ in combination with the $H^p$
bound of $u_n$ from Theorem~\ref{thm:one} and the local error bound
from Lemma~\ref{lem:local-2}.  For $t_n=n\dt\le T$ we have, as in the
proof of Theorem~\ref{thm:one},
\begin{eqnarray*}
  \| u_n - u(t_n) \|_{H^r} &\le&  \sum_{k=0}^{n-1} \| \Phi^{(n-k-1)\dt}(\Psi^\dt(u_k)) - 
  \Phi^{(n-k-1)\dt}(\Phi^\dt(u_k))  \|_{H^r}
  \\
  &\le& \sum_{k=0}^{n-1} K(R,T) \| \Psi^\dt(u_k)-\Phi^\dt(u_k) \|_{H^{r}}
  \\
  &\le& \sum_{k=0}^{n-1} K(R,T) c_2(C_0) \dt^3 \le K(R,T) c_2(C_0) T \dt^2.
\end{eqnarray*}
This completes the proof of Theorem~\ref{thm:two}.


\end{document}